\begin{document}

\title{Higher order Hermite-Fej\'{e}r Interpolation on the unit circle.
}
\subtitle{Higher order Hermite-Fej\'{e}r Interpolation.}


\author{Swarnima Bahadur         \and
 Varun 
}


\institute{S. Bahadur \at
               Department of Mathematics \& Astronomy, University of Lucknow, Lucknow, India\\              
              \email{swarnimabahadur@ymail.com}  \and
           Varun \at
              Department of Mathematics \& Astronomy, University of Lucknow, Lucknow, India\\
              \email{varun.kanaujia.1992@gmail.com}
}

\date{Received: date / Accepted: date}

\maketitle

\begin{abstract}
The aim of this paper is to study the approximation of functions using a higher-order Hermite-Fej\'{e}r interpolation process on the unit circle. The system of nodes is composed of vertically projected zeros of Jacobi polynomials onto the unit circle with boundary points at $ \pm1 $. Values of the polynomial and its first four derivatives are fixed by the interpolation conditions at the nodes. Convergence of the process is obtained for analytic functions on a suitable domain, and the rate of convergence is estimated. 
\keywords{Unit circle \and Non-uniform nodes \and Jacobi Polynomial \and Rate of Convergence \and
	Lagrange Interpolation \and Hermite-Fej\'{e}r interpolation}
\end{abstract}

\section{Introduction\label{s1}}

Approximation of continous functions can be done using different methods by constructing algebraic or trigonometric polynomials. Hermite interpolation attracted the attention of many researchers in the last century. \\
\textbf{Hermite interpolation} \cite{10}: It is the process of finding a polynomial which coincides with the continous function at certain pre-assigned points, called the nodes of interpolation, and its successsive derivatives coinciding with arbritarily chosen numbers.\\

An important step was taken when Fej\'{e}r \cite{7} in 1916 proved a theorem where the values of the derivatives in the Hermite scheme were equal to zero.\\\\ 
\textbf{Fej\'{e}r's theorem :}   If $ f\in C[-1,1]$, then $ H_{n}(f,x) $ converges to $ f(x) $ uniformly on [-1,1] as $ n $ tends to infinty. Interpolation polynomials $H_{n}(f,x)$ is defined by 
\begin{equation*}
	H_{n}(f,x)=\sum_{k=1}^{n}f(x_{kn})(1-x_{kn}x)\Bigg(\frac{T_n(x)}{n(x-x_{kn})}\Bigg)^2,
\end{equation*}
where $x_{kn}$ are the zeros of the Chebyshev polynomial of the first kind. \\$ H_n(f,x) $ satisfies the below given conditions where $ k=1,2,...,n $.
\begin{eqnarray}\notag
	H_n(f,x_{kn})=f(x_{kn})\quad and \quad H_n'(f,x_{kn})=0.
\end{eqnarray}
Mills \cite{9} in his paper highlights Hermite and Hemite Fej\'{e}r interpolation as important techniques in the approximation theory. Fej\'{e}r's theorem has been extended to more general nodal systems. For example, in 2001, Daruis and Gonz\'{a}lez-Vera \cite{6} extended Fej\'{e}r's result to the unit circle by considering the nodal system constituted by the complex $ n^{th} $ roots of unity. They proved that the sequence of Hermite-Fej\'{e}r interpolation polynomials uniformly converge for continous functions on the unit circle.\\

Berriochoa, Cachafeiro and Garc\'{i}a-Amor \cite{2} extended the Fej\'{e}r's second theorem to the unit circle. Then Berriochoa, Cachafeiro, D\'{i}az, and Mart\'{i}nez Brey \cite{3} obtained the supremum norm of the error of interpolation for analytic functions and computed the order of convergence of Hermite-Fej\'{e}r interpolation on the unit circle considering the same set of nodes as of \cite{6}.\\

Apart from the uniform nodal system (where nodes are equally spaced on the unit circle), Hermite-Fej\'{e}r interpolation on the unit circle have been also studied on some non-uniformly distributed nodes on the unit circle (see \cite{1} and \cite{5}).\\
\textbf{Higher order Hermite-Fejér interpolation: } It is a process of finding a polynomial which coincides with a continous function at the nodes of the interpolation and the derivatives upto $  r^{th}  $ order $ (r>1) $ are null at the nodal points.\\

A considerable number of papers on higher order Hermite-Fejér interpolation processes on real nodes have been published (see \cite{11} and \cite{14}). This motivated us to consider a higher order Hermite-Fej\'{e}r interpolation problem on non-uniform set of complex nodes on the unit circle. Let us denote nodal system containing the zeros of the Jacobi polynomial $ P_n^{(\alpha,\beta)}(x) $ by Gauss Jacobi point system. Let us also define two sets $ \mathbb{T}=\{z:|z|=1\} $ and $ \mathbb{D}=\{z:|z|<1\} $.\\

In the present paper, we consider a Hermite-Fej\'{e}r interpolation problem on the nodal system constituted of $  \pm1 $ and the projections of the Gauss Jacobi point system vertically onto the unit circle by the transformation {$x=\dfrac{1+z^2}{2z}$}. The aim of this paper is to extend the Hermite-Fej\'{e}r interpolation on the unit circle problem on all the above said projected nodes upto the fourth derivative and prove the following convergence theorem:
\begin{theorem}\label{thm1.1}
	Let $f(z)$ is a function continous on $\mathbb{T}\cup\mathbb{D}$ and analytic on $ \mathbb{D} $. The sequence of interpolatory polynomial $ \{Q_n(z)\} $ satisfies the below relation
	\begin{equation}\label{1.2}
		\mid Q_n(z)-f(z) \mid \,=\,  \textbf{O}\big(\omega(f,n^{-1})\log n\big) 
	\end{equation}
	where $ \omega(f,n^{-1}) $ represents the modulus of continuity of the function $ f(z) $ and $ \textbf{O} $ notation refers to as $ n \rightarrow \infty $	.
\end{theorem}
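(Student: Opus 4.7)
The plan is to combine a Jackson-type best approximation estimate with a Lebesgue-constant bound for the linear operator $f\mapsto Q_n(f,\cdot)$. Because the interpolation conditions force the derivatives of $Q_n$ (not of $f$) to vanish at every node, we may write $Q_n(f,z)=\sum_{k} f(z_k)A_k(z)$, where the fundamental polynomials $A_k(z)$ satisfy $A_k(z_j)=\delta_{jk}$ together with $A_k^{(r)}(z_j)=0$ for $r=1,2,3,4$ at every node $z_j$ (including the two boundary points $\pm 1$, for which analogous but separately defined fundamental polynomials are used). The explicit formulas for $A_k(z)$ will be extracted from the standard Hermite basis construction: a factor $(z-z_k)^{-1}\prod_{j\neq k}(z-z_j)^{5}$ (suitably modified at the boundary points) multiplied by a polynomial of degree four whose coefficients are chosen so that the interpolatory normalisations hold. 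The key algebraic identity is that $Q_n$ reproduces every polynomial of degree not exceeding the dimension of its image, so in particular $Q_n(P,z)=P(z)$ for any polynomial $P$ of sufficiently high degree (asymptotically of order $n$).

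Next I would invoke a Jackson-type theorem on the disc algebra: since $f$ is continuous on $\mathbb{T}\cup\mathbb{D}$ and analytic on $\mathbb{D}$, there exists a polynomial $P_{m}$ of degree $m=O(n)$ with $\|f-P_m\|_{\infty,\mathbb{T}}\leq C\,\omega(f,n^{-1})$. Applying reproduction to $P_m$ and the triangle inequality yields
\begin{equation*}
|Q_n(f,z)-f(z)|\leq |Q_n(f-P_m,z)|+|f(z)-P_m(z)|\leq \|f-P_m\|_{\infty}\Big(1+\sum_{k}|A_k(z)|\Big).
\end{equation*}
Thus everything reduces to controlling the Lebesgue function $\Lambda_n(z):=\sum_{k}|A_k(z)|$ on $\mathbb{T}\cup\mathbb{D}$.

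The principal obstacle, and the technical heart of the proof, will be showing $\Lambda_n(z)=O(\log n)$. I would split the sum into contributions from interior nodes and from the two boundary nodes $\pm 1$. For the interior nodes, I would substitute $z=e^{i\theta}$, $z_k=e^{i\theta_k}$, use the transformation $x=(1+z^2)/(2z)=\cos\theta$ to translate differences $|z-z_k|$ into $|\cos\theta-\cos\theta_k|$, and then exploit the classical asymptotic estimates for Jacobi polynomials $P_n^{(\alpha,\beta)}(\cos\theta)$, $(P_n^{(\alpha,\beta)})'(\cos\theta_k)$ and for the spacing of zeros $\theta_{k+1}-\theta_k\sim n^{-1}$. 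The resulting bound on an individual fundamental polynomial is expected to behave like $|A_k(z)|\lesssim n^{-1}\min(1,n^{-4}|\theta-\theta_k|^{-5})$ after all cancellations, so summation over $k$ produces a telescoping series whose worst case is $\sum_{k}(n|\theta-\theta_k|)^{-1}=O(\log n)$. The boundary fundamental polynomials, which carry extra factors to compensate for the weight $(1-x^2)$, will be shown to be uniformly bounded so that their contribution is absorbed.

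Finally, choosing $m$ of the order of $n$ and substituting the estimate $\Lambda_n(z)=O(\log n)$ into the error decomposition yields $|Q_n(f,z)-f(z)|=O(\omega(f,n^{-1})\log n)$, uniformly for $z\in\mathbb{T}\cup\mathbb{D}$, which is exactly \eqref{1.2}. The part that will require the most care is the uniform control of the squared (or fifth-power) product $\prod_{j\neq k}(z-z_j)^{5}$ evaluated on the unit circle, since naive bounds lose powers of $n$; a careful use of the product representation of the Jacobi polynomial together with Szeg\H{o}-type asymptotics should supply the compensating factors.
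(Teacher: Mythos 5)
Your overall strategy coincides with the paper's: approximate $f$ by a Jackson polynomial $F_n$ with $\lvert f-F_n\rvert\le C\,\omega(f,n^{-1})$, split the error by the triangle inequality, and reduce everything to the Lebesgue-function bound $\sum_{k}\lvert A_{0k}(z)\rvert=\mathbf{O}(\log n)$, which the paper proves (Lemma 4.3) from the estimates $\lvert L_k(z)\rvert=\mathbf{O}(k^{\alpha-3/2})$, $\lvert c_{pk}\rvert$, and $\lvert R(z)\rvert=\mathbf{O}(K_n n^{\alpha-1})$ derived from Szeg\H{o}'s Jacobi asymptotics. So in outline you have reproduced the paper's proof.

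Two points deserve attention. First, your ``key algebraic identity'' --- that $Q_n$ reproduces every polynomial of degree below the dimension of its range --- is false as stated: the operator $f\mapsto\sum_k f(z_k)A_{0k}(z)$ samples only function values and forces the first four derivatives to vanish at every node, so its range is the $(2n+2)$-dimensional span of the $A_{0k}$, and it reproduces a polynomial $P$ of degree less than $(2n+2)(r+1)$ only when $P^{(r)}(z_k)=0$ for $r=1,\dots,4$ at all nodes. It does not even reproduce all polynomials of degree $\le 2n+1$, since $\sum_k P(z_k)A_{0k}\ne\sum_k P(z_k)L_k$ in general. To make the step $Q_n(F_n)=F_n$ legitimate one must either choose the Jackson polynomial to satisfy the vanishing-derivative conditions or insert and estimate the extra term $\lvert Q_n(F_n)-F_n\rvert$; the paper's own equation (5.1) makes the same tacit leap, so you are no worse off, but you should not present reproduction of arbitrary polynomials as an identity. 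Second, your two proposed pointwise bounds for the fundamental polynomials are mutually inconsistent: $\lvert A_k(z)\rvert\lesssim n^{-1}\min(1,n^{-4}\lvert\theta-\theta_k\rvert^{-5})$ sums to $\mathbf{O}(1)$, not to $\sum_k(n\lvert\theta-\theta_k\rvert)^{-1}=\mathbf{O}(\log n)$. The paper instead indexes the decay by the zero-counting parameter $k$ (via $(1-x_k^2)^{-1}\sim(k/n)^{-2}$, $\lvert P_n^{(\alpha,\beta)\prime}(x_k)\rvert\sim k^{-\alpha-3/2}n^{\alpha+2}$), obtaining terms $\mathbf{O}(k^{5\alpha+p-15/2})$ whose worst case ($p=4$, $\alpha=\tfrac12$) is $k^{-1}$, whence the $\log n$; this is also where the restriction $-1<\alpha\le\tfrac12$ enters, a hypothesis your proposal never identifies.
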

The paper has been organised in following manner.
Preliminaries are given in section \ref{2}. Section \ref{3} covers the interpolation problem and  explicit representation of the interpolatory polynomial. Section \ref{4} is devoted to finding estimates and the proof of theorem \ref{thm1.1} has been assigned section \ref{5}. 

\section{Preliminaries}\label{2}
The differential equation satisfied by  $ P_n^{(\alpha,\beta)}(x) $ is
\begin{equation*}
	(1-x^2)P_n^{(\alpha,\beta)^{''}}(x)+[\beta-\alpha-(\alpha+\beta+2)x]P_n^{(\alpha,\beta)^{'}}(x)+n(n+\alpha+\beta+1)P_n^{(\alpha,\beta)}(x)=0.\\
\end{equation*}
Using the Szeg\H{o} transformation $x=\dfrac{1+z^2}{2z}$,
\begin{equation}
	\begin{split}\label{2.1}
		(z^2-1)^4P_n^{(\alpha,\beta)^{''}}(x)+4z(z^2-1)&\big[\{(\alpha+\beta+2)z^2+1\}(z^2-1)-2z^3(\beta-\alpha)\big]P_n^{(\alpha,\beta)^{'}}(x)\\
		-16z^6n(n+\alpha+\beta+1)P_n^{(\alpha,\beta)}(x)&=0.\\
	\end{split}
\end{equation}

Let ${Z}_n $ be set of nodes
\begin{equation}\notag
	\begin{split}
		{Z}_n =\{z_0=1, z_{2n+1}=-1, z_k=x_k+iy_k=\cos\theta_k+i\, \sin\theta_k\,;\,z_{n+k}=\overline{z_k};\\\,\, k=1,2,3,...,n\,;\,x_k,y_k\in R\},\\
	\end{split}
\end{equation}
which are obtained by projecting vertically the Gauss Jacobi point system on the unit circle together with $ \pm1 $.

The polynomial defined on $ Z_n $  are given by (\ref{2.2}),
\begin{figure}
	\caption{An arbitrary point $ z $ and the nodal system $ Z_n $}
	\centering
	\includegraphics[width=0.5\textwidth]{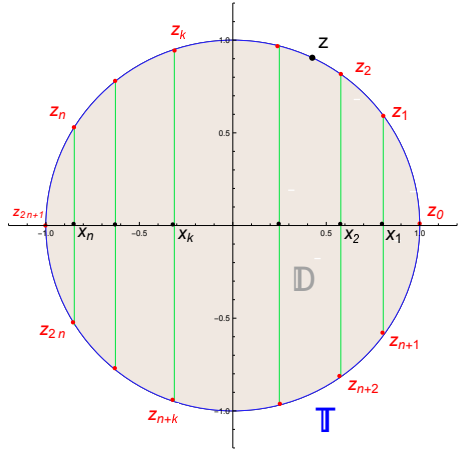}
\end{figure}
\begin{equation}\label{2.2}
	R(z)=(z^2-1)W(z),
\end{equation}
where
\begin{equation}\label{2.3}
	W(z)=\displaystyle\prod_{k=1}^{2n}(z-z_k)=K_nP_n^{(\alpha,\beta)}\Bigg(\frac{1+z^2}{2z}\Bigg)z^n, 
\end{equation}
\begin{equation*}
	K_n=2^{2n}n!\frac{\Gamma(\alpha+\beta+n+1)}{\Gamma(\alpha+\beta+2n+1)}.
\end{equation*}

The fundamental polynomials of Lagrange interpolation on the zeros of $ R(z) $ are given by 
\begin{equation}\label{2.4}
	L_{k}(z)=\frac{R(z)}{(z-z_k)R^{'}(z_k)},\hspace{1cm}k=0,1,...,2n+1.
\end{equation}\\
We can write $ z=x+iy $, where $ x,y\in R $. If $ z\in\mathbb{T}$, then
\begin{equation}\label{2.5}	
	\mid z^2-1\mid=2\sqrt{1-x^2}
\end{equation}
and
\begin{equation}\label{2.6}
	\mid z-z_k\mid=\sqrt{2}\,\sqrt{1-xx_k-\sqrt{1-x^2}\sqrt{1-x_k^2}}.
\end{equation}
In order to evaluate the estimates of the fundamental polynomials formed in section \ref{3}, we will be using below results.\\

For $-1\leq x\leq1$, we have
\begin{equation}\label{2.7}
	(1-x^2)^{1/2}\mid{P_n^{(\alpha,\beta)}(x)}\mid=O(n^{\alpha-1}),
\end{equation}
\begin{equation}\label{2.8}
	\mid{P_n^{(\alpha,\beta)}(x)}\mid=O(n^{\alpha}),
\end{equation}
\begin{equation}\label{2.9}
	\mid{P_n^{(\alpha,\beta)'}(x)}\mid=O(n^{\alpha+2}),
\end{equation}
\begin{equation}\label{2.10}
	\mid{P_n^{(\alpha,\beta)''}(x)}\mid=O(n^{\alpha+4}).
\end{equation}
Considering set of nodes $ Z_n $, where $ x_k= \cos\theta_k \, ,k=1,2,...,n$ are the zeros of $ {P_n^{(\alpha,\beta)}(x)} $, then
\begin{equation}\label{2.11}
	(1-x_k^2)^{-1}\sim\Bigg(\frac{k}{n}\Bigg)^{-2},
\end{equation}
\begin{equation}\label{2.12}
	\mid{P_n^{(\alpha,\beta)}(x_k)}\mid\sim k^{-\alpha-\frac{1}{2}}n^{\alpha},
\end{equation}
\begin{equation}\label{2.13}
	\mid{P_n^{(\alpha,\beta)'}(x_k)}\mid\sim k^{-\alpha-\frac{3}{2}}n^{\alpha+2}.
\end{equation}
For more details, refer pg.164-166 of \cite{13}.\\\\
Let $f(z)$ be continous on $\mathbb{T}\cup\mathbb{D}$ and analytic on $ \mathbb{D} $. Then, there exists a polynomial $F_n(z) $ of degree less than $ (2n+2)(r+1) $ satisfying Jackson's inequality.\cite{8}
\begin{equation}\label{2.14}
	\mid f(z)-F_n(z) \mid \,\leq \,C \,\omega(f,n^{-1}),
\end{equation}
where $ \omega(f,n^{-1}) $ represents the modulus of continuity of the function $ f(z) $ and $ C $ is independent	of $ n $  and $ z $.

\section{The problem and explicit representation of interpolatory polynomial}\label{3}

Here, we are interested in determining the convergence of interpolatory polynomial $ Q_n(z) $ of degree less than $(2n+2)(r+1) $ on the distinct set of nodes $ \{z_k\}^{2n+1}_{k=0} $ with Hermite conditions at all points satisfying 
\begin{equation}\label{3.1}
	\begin{cases}
		Q_n{(z_k)}=\alpha_k,   \,\,\, \,\,\,\,\,\,\,\,\,\,\,\,\ k=0,1,...,2n+1\\
		
		Q_n^{(r)}{(z_k)}=0, \,\,\,\,\,\,\,\,\,\,\,\,\,\,\,\ k=0,1,...,2n+1,\, r=1,2,3,4\,,\\
	\end{cases} 
\end{equation}

where $ \alpha_k $ and $ \beta_k $ are complex constants.

\begin{theorem}
	We  shall write $ Q_n(z) $ satisfying (\ref{3.1})\\ 
	\begin{equation}\label{3.2}
		Q_n(z) = \sum_{k=0}^{2n+1} f(z_k)A_{0k}(z),
	\end{equation}
	
	where $ A_{0k}(z) $ is a polynomial  of degree less than $(2n+2)(r+1) $  satisfying the conditions given in (\ref{3.3}).
	
	For $ j,k=0,1,...,2n+1 $ ,
	\begin{equation}\label{3.3}
		\begin{cases}
			A_{0k}(z_j) = \delta_{kj},\\
			
			A_{0k}^{(r)}(z_j)=0\,\,\,\,\,\,\,\,\,\,\,\,\,\,\,\,\, \, ;r=1,2,3,4\,\,,\\
		\end{cases} 
	\end{equation}
	where	
	
	\begin{equation}\label{3.4}
		A_{0k}(z)= [L_k(z)]^{5} + \sum_{p=1}^{4}c_{pk}A_{pk}(z),
	\end{equation}
	\begin{equation}\label{3.5}
		A_{pk}(z)=[R(z)]^p(L_k(z))^{5-p},
	\end{equation}
	\begin{equation}\label{3.6}
		c_{1k}=-\,\frac{5\,L_k'(z_k)}{R'(z_k)},
	\end{equation}
	
	\begin{equation}\label{3.7}
		c_{2k}=-\,\frac{5}{2!\,[R'(z_k)]^2}\,\big[L_k''(z_k)+10\,[L_k'(z_k)]^2\big],
	\end{equation}
	
	\begin{equation}\label{3.8}
		c_{3k}=-\,\frac{5}{3!\,[R'(z_k)]^3}\,\big[-18\,L_k''(z_k)\,L_k'(z_k)\,+\,L_k'''(z_k)\,-\,198[L_k'(z_k)]^3\big],
	\end{equation}
	\begin{equation}\label{3.9}
		\begin{split}
			c_{4k}=&-\frac{5}{4!\,[R'(z_k)]^4}\bigg[L_k''''(z_k)-24L_k'''(z_k)\,L_k'(z_k)\\
			&+[L_k''(z_k)]^2-156[L_k'(z_k)]^2L_k''(z_k)+2544[L_k'(z_k)]^4\bigg].\\	
		\end{split}
	\end{equation}
\end{theorem}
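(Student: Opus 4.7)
The plan is to verify the cardinality conditions (\ref{3.3}) for each fundamental block $A_{0k}(z)$; the interpolation formula (\ref{3.2}) then follows by linearity, since $Q_n(z_j)=\sum_{k}f(z_k)A_{0k}(z_j)=f(z_j)$ and $Q_n^{(r)}(z_j)=0$ for $r=1,\dots,4$. I would split the verification according to whether the node being tested is a $z_j$ with $j\ne k$ or the ``cardinal'' node $z_k$ itself.

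For $j\ne k$ the argument is soft. Both $R$ and $L_k$ have simple zeros at $z_j$, so every summand $A_{pk}(z)=[R(z)]^{p}[L_k(z)]^{5-p}$ vanishes to order $p+(5-p)=5$ at $z_j$, and likewise for $[L_k(z)]^{5}$. Hence $A_{0k}^{(r)}(z_j)=0$ for every $r=0,1,2,3,4$, which delivers both $A_{0k}(z_j)=\delta_{kj}$ and the vanishing of the first four derivatives at the non-cardinal nodes in a single stroke.

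At $z=z_k$ the value $A_{0k}(z_k)=1$ is immediate from $L_k(z_k)=1$ and $R(z_k)=0$. The four derivative conditions $A_{0k}^{(r)}(z_k)=0$ are what pin down the constants $c_{pk}$. The key simplification I would exploit is the polynomial factorisation
\begin{equation*}
R(z)=(z-z_k)\,R'(z_k)\,L_k(z),
\end{equation*}
which is immediate from (\ref{2.4}) because $R$ has a simple zero at $z_k$. Substituting into (\ref{3.5}) collapses each block to $A_{pk}(z)=[R'(z_k)]^{p}(z-z_k)^{p}[L_k(z)]^{5}$, so that
\begin{equation*}
A_{0k}(z)=[L_k(z)]^{5}\,\Phi_k(z),\qquad \Phi_k(z)=1+\sum_{p=1}^{4}c_{pk}\,[R'(z_k)]^{p}(z-z_k)^{p}.
\end{equation*}
Applying Leibniz's rule at $z_k$, together with $\Phi_k^{(m)}(z_k)=m!\,c_{mk}[R'(z_k)]^{m}$ (setting $c_{0k}:=1$), converts the four vanishing conditions into the lower-triangular system
\begin{equation*}
r!\,c_{rk}\,[R'(z_k)]^{r}=-\sum_{j=1}^{r}\binom{r}{j}\bigl([L_k]^{5}\bigr)^{(j)}(z_k)\,(r-j)!\,c_{r-j,k}\,[R'(z_k)]^{r-j},\qquad r=1,2,3,4.
\end{equation*}
This determines $c_{1k},c_{2k},c_{3k},c_{4k}$ by successive substitution, and expanding $\bigl([L_k]^{5}\bigr)^{(j)}(z_k)$ in terms of $L_k'(z_k),L_k''(z_k),L_k'''(z_k),L_k''''(z_k)$ via Fa\`a di Bruno produces the closed forms (\ref{3.6})--(\ref{3.9}).

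The main obstacle is purely computational: the case $r=4$ requires expanding $\bigl([L_k]^{5}\bigr)^{(4)}(z_k)$ as a combination of $L_k''''$, $L_k'''L_k'$, $(L_k'')^{2}$, $L_k''(L_k')^{2}$ and $(L_k')^{4}$ and then back-substituting the previously found $c_{1k},c_{2k},c_{3k}$, so the bookkeeping is heavy even though the structure of the system is trivial. A concluding degree count $\deg A_{0k}\le\deg A_{4k}=4(2n+2)+(2n+1)=10n+9<(2n+2)(r+1)$ with $r=4$ confirms that $Q_n$ belongs to the polynomial space declared in Section~\ref{3}.
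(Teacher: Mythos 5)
Your proposal is correct in overall structure and follows the same basic strategy as the paper: verify cardinality at the nodes, then impose $A_{0k}^{(r)}(z_k)=0$ for $r=1,\dots,4$ successively to solve for the $c_{pk}$. What you do differently is the organization. The paper differentiates (\ref{3.10}) directly, writes out only the $r=1$ case (obtaining $5L_k'(z_k)+c_{1k}R'(z_k)=0$), and dismisses $r=2,3,4$ with ``in a similar manner''; your factorisation $R(z)=(z-z_k)R'(z_k)L_k(z)$, which collapses $A_{0k}$ to $[L_k]^5\Phi_k$ with $\Phi_k$ a polynomial in $(z-z_k)$, turns all four conditions into a single Leibniz expansion and an explicitly lower-triangular system. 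That is a genuinely cleaner route: it makes the solvability and uniqueness of the $c_{pk}$ transparent, and it handles the $j\neq k$ nodes for all derivative orders at once via the order-$5$ vanishing of each $A_{pk}$, where the paper only checks the first derivative. Your degree count $\deg A_{0k}\le 10n+9<(2n+2)(r+1)$ is also correct and is omitted by the paper.

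One caveat. You assert, but do not carry out, the back-substitution that is supposed to reproduce the closed forms (\ref{3.6})--(\ref{3.9}), and this is precisely the step where the content of the theorem lives. Running the $r=2$ case of your own triangular system, with $([L_k]^5)''(z_k)=5L_k''(z_k)+20[L_k'(z_k)]^2$ and $\Phi_k'(z_k)=c_{1k}R'(z_k)=-5L_k'(z_k)$, gives
\begin{equation*}
c_{2k}=-\,\frac{5}{2!\,[R'(z_k)]^2}\Big[L_k''(z_k)-6\,[L_k'(z_k)]^2\Big],
\end{equation*}
which does not agree with the coefficient $+10$ printed in (\ref{3.7}). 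So either the paper's stated constants contain errors, or there is an unstated convention; in either case your final sentence ``expanding \dots produces the closed forms (\ref{3.6})--(\ref{3.9})'' cannot be left as an assertion --- the explicit computation for $r=2,3,4$ must be exhibited (and reconciled with the paper) before the proof of the theorem as stated is complete.
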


\begin{proof}
	Let $  A_{0k}(z)  $ be written as  
	\begin{equation}\label{3.10}
		A_{0k}(z)= [L_k(z)]^{5} + \sum_{p=1}^{4}c_{pk}[R(z)]^p(L_k(z))^{5-p}.
	\end{equation} 
	At $ z=z_j $, where $ j=0,1,...,2n+1 $ ,
	\begin{equation*}
		A_{0k}(z_j)= [L_k(z_j)]^{5} + \sum_{p=1}^{4}c_{pk}[R(z_j)]^p(L_k(z_j))^{5-p}.
	\end{equation*} 
	Using (\ref{2.2}), we have $ R(z_j)=0 $ and from (\ref{2.4}), we have 
	
	\begin{equation}\label{3.11}
		A_{0k}(z_j)=\delta_{kj}.
	\end{equation}
	Clearly, the first set of condition in (\ref{3.3}) is satisfied.\\
	In order to determine the $  c_{pk} $'s, we use the second set of conditions of (\ref{3.3}).\\
	On differentiating $ A_{0k}(z)$ in (\ref{3.10}) one time with respect to $z$, we get
	\begin{equation}\label{3.12}
		A_{0k}'(z)= 5L_k'(z)[L_k(z)]^{4} + c_{1k}[R(z)(L_k(z))^4]'+\Big[\sum_{p=2}^{4}c_{pk}[R(z)]^p(L_k(z))^{5-p}\Big]'.
	\end{equation}
	Clearly, at $ z=z_j \,\,(j\neq k)$, we have $ A_{0k}'(z)=0.$\\
	At $ z=z_k $, $ A_{0k}'(z) $ must be equal to zero. We have
	\begin{eqnarray}\notag
		5\,L_k'(z_k)+c_{1k}\,R'(z_k)=0,
	\end{eqnarray} 
	which provides (\ref{3.6}).
	In a similar manner, differentiating (\ref{3.10}) two, three and four times with respect to $ z $ gives (\ref{3.7}), (\ref{3.8}) and (\ref{3.9}) respectively by using conditions given in (\ref{3.3}).
	
\end{proof}

\section{Estimation of the fundamental polynomials}\label{4}

\normalfont In order to find the estimates, we intend to represent the constants $ c_{pk} $ in general form as given under ($ p $=0,1,2,3,4)
\begin{equation}\label{4.1}
	c_{pk}=\frac{5}{p![R'(z_k)]^p}\sum_{s=0}^{[\frac{p}{2}]}\sum_{r=s}^{p-s}e_{psr}[L_k^{(s)}(z_k)]^r
	L_k^{(p-sr)}(z_k),
\end{equation}

where $ e_{psr} $ are the constants independent of $ n $ and $ z $ and $ \bigg[\dfrac{p}{2}\bigg] $ denotes greatest integer function Also, $ L_k^{(s)}(z_k) $ denotes $ s^{th } $ derivative of  $ L_k(z) $ at $ z=z_k.  $ 

\begin{lemma}\label{lem4.1}
	Let $  L_k(z)  $ be given by (\ref{2.4}), then for $ z\in\mathbb{T}\cup\mathbb{D}$, we have 	
	\begin{equation}\label{4.2}
		\mid L_k(z)\mid=\textit{\textbf{O}} \Bigg(\frac{1}{k^{-\alpha+\frac{3}{2}}}\bigg)
	\end{equation}
	\begin{proof}
		For $ k=1,2,...,2n $
		\begin{equation}\label{4.3}
			L_{k}(z)=\frac{R(z)}{(z-z_k)R^{'}(z_k)}.
		\end{equation}
		Taking modulus on the both sides,
		
		\begin{eqnarray}\notag
			\mid L_{k}(z)\mid&=&\frac{\mid R(z)\mid }{\mid z-z_k\mid \mid R^{'}(z_k)\mid},\\\notag
			&=&\frac{\mid (z^2-1)W(z)\mid }{\mid z-z_k\mid \mid \{2zW(z)+(z^2-1)W'(z))\}_{z=z_k}\mid}.\\\notag
		\end{eqnarray}
		Since $ z_k's $ are the zeros of $ W(z) $, using (\ref{2.3}), we get
		\begin{eqnarray}\notag
			\mid L_{k}(z)\mid&=&\frac{\Big| (z^2-1)K_nP_n^{(\alpha,\beta)}\Big(\frac{1+z^2}{2z}\Big)z^n\Big| }{\mid z-z_k\mid \Big| (z_k^2-1)\Big\{K_nP_n^{(\alpha,\beta)}\Big(\frac{1+z^2}{2z}\Big)z^n\Big\}'_{z=z_k}\Big|}\\\notag
			&=&\frac{\Big| (z^2-1)P_n^{(\alpha,\beta)}\Big(\frac{1+z^2}{2z}\Big)z^n\Big| }{\mid z-z_k\mid \Big| (z_k^2-1)\Big\{nz_k^{n-1}P_n^{(\alpha,\beta)}(x_k)+z_k^nP_n^{(\alpha,\beta)'}(x_k)\Big(\frac{z_k^2-1}{2z_k^2}\Big)\Big\}\Big|}\\\notag
			&=&\frac{2|z^2-1||P_n^{(\alpha,\beta)}(x)||z|^n }{\mid z-z_k\mid  |z_k|^{n-2}|z_k^2-1|^2|P_n^{(\alpha,\beta)'}(x_k)|}.\\\notag
		\end{eqnarray}
		Using (\ref{2.5}) and (\ref{2.6}), we get
		\begin{eqnarray}\notag
			\mid L_{k}(z)\mid&=&\frac{2.2\sqrt{1-x^2}|P_n^{(\alpha,\beta)}(x)||z|^n }{4({1-x_k^2}) \sqrt{2}\,\sqrt{1-xx_k-\sqrt{1-x^2}\sqrt{1-x_k^2}}|P_n^{(\alpha,\beta)'}(x_k)|}\\\notag
			&=&\frac{\sqrt{1-x^2}|P_n^{(\alpha,\beta)}(x)||z|^n\,\sqrt{1-xx_k+\sqrt{1-x^2}\sqrt{1-x_k^2}} }{\sqrt{2}({1-x_k^2}) \,\sqrt{(1-xx_k)^2-({1-x^2})({1-x_k^2})}|P_n^{(\alpha,\beta)'}(x_k)|}\\\notag
			&=&\frac{\sqrt{1-x^2}|P_n^{(\alpha,\beta)}(x)||z|^n\,\sqrt{1-xx_k+\sqrt{1-x^2-x_k^2+x^2x_k^2}} }{\sqrt{2}({1-x_k^2}) \,\mid x-x_k\mid |P_n^{(\alpha,\beta)'}(x_k)|}\\\notag	
			&=&\frac{\sqrt{1-x^2}|P_n^{(\alpha,\beta)}(x)||z|^n\,\sqrt{1-xx_k+\sqrt{(1-xx_k)^2-(x-x_k)^2}} }{\sqrt{2}({1-x_k^2}) \,\mid x-x_k\mid |P_n^{(\alpha,\beta)'}(x_k)|}\\\notag
			&\leq&\frac{\sqrt{1-x^2}|P_n^{(\alpha,\beta)}(x)|\,\sqrt{1-xx_k}}{({1-x_k^2}) \,\mid x-x_k\mid |P_n^{(\alpha,\beta)'}(x_k)|}.\\\notag	
		\end{eqnarray}
		
		For $ \mid x-x_k \mid\geq\frac{1}{2}\mid1-x_k^2 \mid  $, we get
		\begin{eqnarray}\notag
			\mid L_{k}(z)\mid&\leq&C\frac{\sqrt{1-x^2}|P_n^{(\alpha,\beta)}(x)|\,}{({1-x_k^2})^{3/2} \, |P_n^{(\alpha,\beta)'}(x_k)|},\\\notag
		\end{eqnarray}
		where $ C $ is constant independent of $ n $ and $ z $.
		Using (\ref{2.7}), (\ref{2.11}) and (\ref{2.13}), we have
		\begin{eqnarray}\label{4.4}
			\mid L_{k}(z)\mid&=&\textit{\textbf{O}}\Bigg(\frac{1}{k^{-\alpha+\frac{3}{2}}} \Bigg).
		\end{eqnarray}
		Similarly, for $ \mid x-x_k \mid\leq\frac{1}{2}\mid1-x_k^2 \mid  $, we get the same result as (\ref{4.4}).\\
		For $ k=0  $ and $ k=2n+1 $, we have 
		\begin{equation}\label{4.5}
			\mid L_0(z)\mid=\mid L_{2n+1}(z)\mid=\textit{\textbf{O}}(1).
		\end{equation}
		From (\ref{4.4}) and (\ref{4.5}), we have Lemma \ref{lem4.1}. 
	\end{proof}
	
\end{lemma}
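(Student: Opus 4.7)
The plan is to first establish the estimate on the unit circle $\mathbb{T}$, where the explicit identities (\ref{2.5}) and (\ref{2.6}) let us convert modulus computations into real-variable expressions in $x=\mathrm{Re}(z)$ and $x_k=\cos\theta_k$. Since $L_k(z)$ is a polynomial, the extension from $\mathbb{T}$ to $\mathbb{T}\cup\mathbb{D}$ will then follow immediately from the maximum modulus principle applied to the closed unit disk.

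To begin, I would substitute the factorization $R(z)=(z^2-1)W(z)$ from (\ref{2.2}) into (\ref{2.4}), and use (\ref{2.3}) to express $W$ (and its derivative at $z_k$) through $P_n^{(\alpha,\beta)}$. Because $P_n^{(\alpha,\beta)}(x_k)=0$, the derivative $R'(z_k)$ collapses to a clean expression involving only $P_n^{(\alpha,\beta)\prime}(x_k)$ and elementary factors of $z_k$ and $(z_k^2-1)$. After invoking $|z|=1$ together with (\ref{2.5})--(\ref{2.6}), the modulus $|L_k(z)|$ reduces to a ratio of the form
\begin{equation*}
|L_k(z)| \;\le\; C\,\frac{\sqrt{1-x^2}\,|P_n^{(\alpha,\beta)}(x)|\sqrt{1-xx_k}}{(1-x_k^2)\,|x-x_k|\,|P_n^{(\alpha,\beta)\prime}(x_k)|}.
\end{equation*}

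The next step is a standard two-case split according to the location of $x$ relative to the node. When $|x-x_k|\ge\tfrac12(1-x_k^2)$, the factor $|x-x_k|$ can be absorbed so that the denominator becomes $(1-x_k^2)^{3/2}|P_n^{(\alpha,\beta)\prime}(x_k)|$; plugging in (\ref{2.7}) for the numerator and (\ref{2.11}), (\ref{2.13}) for the denominator makes the powers of $n$ cancel and yields the desired $\mathit{O}(k^{\alpha-3/2})$. When $|x-x_k|<\tfrac12(1-x_k^2)$, one instead exploits that $x$ lies in a neighborhood of $x_k$ where $P_n^{(\alpha,\beta)}$ vanishes: a mean-value estimate bounds $|P_n^{(\alpha,\beta)}(x)|$ by $|x-x_k|$ times $\sup|P_n^{(\alpha,\beta)\prime}|$ on that interval, which through (\ref{2.9}) and (\ref{2.11}) restores the same final rate. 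Finally, for the endpoints $z_0=1$ and $z_{2n+1}=-1$, $R'(z_k)$ again simplifies (this time through $W(\pm 1)$, which is controlled by $P_n^{(\alpha,\beta)}(\pm 1)$), giving directly $|L_0(z)|=|L_{2n+1}(z)|=\mathit{O}(1)$, consistent with the stated rate.

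The main obstacle is the near-node case $|x-x_k|<\tfrac12(1-x_k^2)$: here one cannot simply discard $|x-x_k|$ from the denominator, and the argument must use the vanishing of $P_n^{(\alpha,\beta)}$ at $x_k$ together with a uniform derivative bound on a shrinking neighborhood whose width is comparable to $1-x_k^2$. Getting this comparison right (so that the $(n/k)$-type factors from (\ref{2.11}) align with the $n^{\alpha+2}$ from (\ref{2.9}) and $k^{\alpha+3/2}n^{-\alpha-2}$ from (\ref{2.13})) is where the sharp power $k^{\alpha-3/2}$ is produced. Once that is in place, the analytic extension to $\mathbb{D}$ is automatic.
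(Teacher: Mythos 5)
Your route is the same as the paper's: substitute $R=(z^2-1)W$ into (\ref{2.4}), use $P_n^{(\alpha,\beta)}(x_k)=0$ to collapse $R'(z_k)$ to $\tfrac{K_n}{2}z_k^{n-2}(z_k^2-1)^2P_n^{(\alpha,\beta)\prime}(x_k)$, convert to real quantities via (\ref{2.5})--(\ref{2.6}), arrive at exactly the displayed inequality, split on $|x-x_k|\gtrless\tfrac12(1-x_k^2)$, and treat $k=0,2n+1$ separately through $W(\pm1)$. Your far-node case and endpoint case match the paper and close correctly, and your explicit appeal to the maximum modulus principle to pass from $\mathbb{T}$ to $\mathbb{T}\cup\mathbb{D}$ is a step the paper silently needs (its identities (\ref{2.5})--(\ref{2.6}) are only valid on $\mathbb{T}$) but never states.

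The genuine gap is in the near-node case, which you correctly single out as the main obstacle but do not actually close. Writing $|P_n^{(\alpha,\beta)}(x)|\le|x-x_k|\sup_\xi|P_n^{(\alpha,\beta)\prime}(\xi)|$ and bounding the supremum by the \emph{global} estimate (\ref{2.9}), i.e.\ $O(n^{\alpha+2})$, leaves you with
\begin{equation*}
|L_k(z)|\;=\;O\!\left(\frac{(1-x_k^2)\,n^{\alpha+2}}{(1-x_k^2)\,|P_n^{(\alpha,\beta)\prime}(x_k)|}\right)\;=\;O\!\left(\frac{n^{\alpha+2}}{k^{-\alpha-\frac32}n^{\alpha+2}}\right)\;=\;O\!\left(k^{\alpha+\frac32}\right),
\end{equation*}
which is off from the claimed $O(k^{\alpha-\frac32})$ by a factor of $k^{3}$. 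Nor can you replace the global supremum by $|P_n^{(\alpha,\beta)\prime}(x_k)|$ itself: the interval $|x-x_k|\le\tfrac12(1-x_k^2)$ has length $\asymp(k/n)^2$, which is roughly $k$ times the local spacing $\asymp k/n^{2}$ of consecutive zeros, so it contains many other nodes and the derivative is not uniformly comparable to its value at $x_k$ there. Closing this case requires a local envelope bound for $P_n^{(\alpha,\beta)\prime}$ in the oscillatory region, which is not among the preliminaries (\ref{2.7})--(\ref{2.13}) you are permitted to cite. To be fair, the paper itself disposes of this case with a bare ``similarly,'' so your proposal is no less complete than the published argument --- but as written the mechanism you name does not produce the stated power, and this is where the proof still needs an idea.
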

\begin{lemma}\label{lem4.2}
	Let $ c_{pk} $ be given by (\ref{4.1}), then
	\begin{equation}\label{4.6}
		\mid c_{pk}\mid =\textit{\textbf{O}}\Bigg(\frac{1}{k_n^p\,n^{p(\alpha-1)}\, k^{\,{p}/{2}-p\alpha }}\Bigg).
	\end{equation}
	\begin{proof}
		From (\ref{2.2}) and (\ref{2.3}), we have 
		\begin{equation*}
			R'(z_k)=\Bigg(\frac{K_n}{2}\Bigg)z_k^{n-2}(z_k^2-1)^2P_n^{(\alpha,\beta)'}(x_k).
		\end{equation*}
		Taking modulus on both the sides, we get
		\begin{eqnarray}\notag
			\mid R'(z_k)\mid=\Bigg(\frac{K_n}{2}\Bigg)\mid z_k\mid^{n-2}\,\mid(z_k^2-1)\mid^2\,\mid P_n^{(\alpha,\beta)'}(x_k)\mid.\\\notag
		\end{eqnarray}
		From (\ref{2.5}), (\ref{2.11}) and (\ref{2.13}), we have
		\begin{equation}\label{4.7}
			\mid R'(z_k)\mid=\textit{\textbf{O}}(K_n\, k^{-\alpha+\frac{1}{2}}n^{\alpha}).
		\end{equation}
		Similarly, from (\ref{2.1}), (\ref{2.4}) and (\ref{2.5}), we have
		\begin{equation}\label{4.8}
			\mid L_k^{(s)}(z_k)\mid=\textit{\textbf{O}}(n^s).
		\end{equation}
		Using (\ref{4.7}) and (\ref{4.8}) in (\ref{4.1}), we have Lemma \ref{lem4.2}.

	\end{proof}
\end{lemma}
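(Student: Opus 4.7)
The plan is to control $|c_{pk}|$ by bounding each ingredient in formula (\ref{4.1}) separately: a lower bound on $|R'(z_k)|$, which controls the prefactor $1/[R'(z_k)]^p$, together with upper bounds on the nodal derivatives $|L_k^{(s)}(z_k)|$ that appear in the double sum. The two are then combined by counting the total derivative order inside each summand.

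For $|R'(z_k)|$ I would use $R(z)=(z^2-1)W(z)$ with $W$ vanishing at $z_k$ for $1\le k\le 2n$; the product rule together with the Szeg\H{o} chain-rule step $dx/dz=(z^2-1)/(2z^2)$ and the identity $P_n^{(\alpha,\beta)}(x_k)=0$ give $R'(z_k)=(K_n/2)\,z_k^{n-2}(z_k^2-1)^2 P_n^{(\alpha,\beta)'}(x_k)$. Taking moduli and inserting (\ref{2.5}), (\ref{2.11}) and (\ref{2.13}) then yields estimate (\ref{4.7}), i.e.\ $|R'(z_k)|=\mathbf{O}(K_n\,k^{-\alpha+1/2}\,n^\alpha)$.

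For $|L_k^{(s)}(z_k)|$ with $s=1,2,3,4$, the cleanest route is to Taylor-expand $R$ around the simple zero $z_k$, which gives the closed-form identity $L_k^{(s)}(z_k)=R^{(s+1)}(z_k)/\bigl[(s+1)\,R'(z_k)\bigr]$ and reduces everything to ratios $R^{(j)}(z_k)/R'(z_k)$. Successive differentiation of $R=(z^2-1)W$ expresses $R^{(j)}(z_k)$ as a combination of the $W^{(i)}(z_k)$ with $i\le j$, and each $W^{(i)}(z_k)$ further unfolds into Jacobi derivatives $P_n^{(\alpha,\beta)^{(\ell)}}(x_k)$ weighted by chain-rule factors. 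Using $P_n^{(\alpha,\beta)}(x_k)=0$, the transformed differential equation (\ref{2.1}) and its successive derivatives in $x$ allow us to replace $P_n^{(\alpha,\beta)^{(\ell)}}(x_k)$ for $\ell\ge 2$ by algebraic combinations of $P_n^{(\alpha,\beta)'}(x_k)$ whose coefficients are bounded in $z_k$ apart from negative powers of $(z_k^2-1)$; these are then exactly cancelled by the $(z^2-1)$-factors produced by differentiating $R$ and by the Szeg\H{o} factor $x'(z)$. The outcome is the clean bound $|L_k^{(s)}(z_k)|=\mathbf{O}(n^s)$, namely (\ref{4.8}).

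To assemble the estimate, observe that every summand in (\ref{4.1}) has total differentiation order exactly $p$ in the factors $L_k^{(\cdot)}(z_k)$ (this is transparent from (\ref{3.6})--(\ref{3.9})), so substituting $|L_k^{(s)}(z_k)|=\mathbf{O}(n^s)$ makes each summand $\mathbf{O}(n^p)$. Combining with $|R'(z_k)|^{-p}=\mathbf{O}(K_n^{-p}\,n^{-p\alpha}\,k^{p(\alpha-1/2)})$ yields
$$|c_{pk}|=\mathbf{O}\!\left(\frac{n^p}{K_n^p\,n^{p\alpha}\,k^{p/2-p\alpha}}\right)=\mathbf{O}\!\left(\frac{1}{K_n^p\,n^{p(\alpha-1)}\,k^{p/2-p\alpha}}\right),$$
which is the assertion. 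The genuinely delicate step is the second one: confirming $|L_k^{(s)}(z_k)|=\mathbf{O}(n^s)$ rather than some larger power. A naive bound from (\ref{2.10}) alone, ignoring $P_n^{(\alpha,\beta)}(x_k)=0$, is too weak; the saving comes entirely from the nodal vanishing paired with the $(z^2-1)$-weighting built into the Szeg\H{o} chain rule, and tracking these cancellations through four derivatives is the main piece of bookkeeping.
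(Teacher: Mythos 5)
Your proposal follows essentially the same route as the paper's proof: the identical closed form for $R'(z_k)$ leading to (\ref{4.7}), the bound $\mid L_k^{(s)}(z_k)\mid=\textit{\textbf{O}}(n^s)$ drawn from the transformed differential equation (\ref{2.1}) (which the paper asserts in one line and for which you correctly supply the mechanism --- the Taylor identity $L_k^{(s)}(z_k)=R^{(s+1)}(z_k)/[(s+1)R'(z_k)]$ and the cancellation of the $(1-x_k^2)^{-1}$ factors against the Szeg\H{o} chain-rule weights), and the same assembly using the fact that each summand of (\ref{4.1}) has total derivative order $p$. Your final expression with $K_n^p$ matches the lemma's $k_n^p$, which is evidently a typographical variant of the constant $K_n$ from (\ref{2.3}).
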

\begin{lemma}
	Let $ A_{0k}(z)$ be given by (\ref{3.4}) and $ c_{pk} $ given by (\ref{4.1}), then for $ z\in\mathbb{T}\cup\mathbb{D}$,
	\begin{equation}\label{4.9}
		\sum\limits_{k=0}^{2n+1}\mid A_{0k}(z)\mid\;=\textit{\textbf{O}}(\log n),	
	\end{equation}
	where $ -1<\alpha\leq\dfrac{1}{2}. $
\end{lemma}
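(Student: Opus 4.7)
The plan is to decompose $A_{0k}(z)$ via the explicit formula (\ref{3.4}) and bound each piece by combining Lemmas \ref{lem4.1} and \ref{lem4.2} with a size estimate for $R(z)$ on $\mathbb{T}\cup\mathbb{D}$. Since $R(z)$ is a polynomial, by the maximum modulus principle it suffices to estimate $|R(z)|$ on $\mathbb{T}$. From (\ref{2.3}), (\ref{2.5}) and (\ref{2.7}), I would get
\begin{equation*}
	|R(z)| = 2\sqrt{1-x^2}\, K_n\, |P_n^{(\alpha,\beta)}(x)|\, |z|^n = \textbf{\textit{O}}\!\left(K_n\, n^{\alpha-1}\right) \qquad (z\in\mathbb{T}\cup\mathbb{D}).
\end{equation*}

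Next, I would apply the triangle inequality to (\ref{3.4}) and sum over $k$, treating the boundary indices $k=0,2n+1$ separately (they give an $\textbf{\textit{O}}(1)$ contribution by (\ref{4.5})). For $1\le k\le 2n$, Lemma \ref{lem4.1} gives $|L_k(z)|^{5-p}=\textbf{\textit{O}}(k^{(5-p)(\alpha-3/2)})$, Lemma \ref{lem4.2} gives $|c_{pk}|=\textbf{\textit{O}}(K_n^{-p}n^{-p(\alpha-1)}k^{-p/2+p\alpha})$, and the above bound for $R(z)$ contributes $|R(z)|^p=\textbf{\textit{O}}(K_n^p n^{p(\alpha-1)})$. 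Multiplying, the $K_n$ and $n$ factors cancel exactly, leaving
\begin{equation*}
	|c_{pk}|\,|R(z)|^p\,|L_k(z)|^{5-p} = \textbf{\textit{O}}\!\left(k^{5\alpha-15/2+p}\right),\qquad p=0,1,2,3,4.
\end{equation*}

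Now it is just a matter of summing $\sum_{k=1}^{2n} k^{5\alpha-15/2+p}$ for each $p$. Using $-1<\alpha\le\tfrac12$, the exponent is at most $-5,-4,-3,-2,-1$ for $p=0,1,2,3,4$ respectively. The first four sums are uniformly bounded, while the $p=4$ sum is bounded by $\sum_{k=1}^{2n} k^{5\alpha-7/2}$, which in the critical case $\alpha=\tfrac12$ is the harmonic sum $\sum 1/k = \textbf{\textit{O}}(\log n)$, and for $\alpha<\tfrac12$ is $\textbf{\textit{O}}(1)$. Aggregating all contributions together with the boundary terms yields $\sum_{k=0}^{2n+1}|A_{0k}(z)|=\textbf{\textit{O}}(\log n)$.

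The main obstacle I anticipate is a bookkeeping one: verifying that the polynomial arithmetic of exponents in $n$ and $k$ really cancels between $c_{pk}$ and $R(z)^p$ to leave a pure power of $k$, and checking that the case split in Lemma \ref{lem4.1} (whether $|x-x_k|$ is large or small compared to $1-x_k^2$) does not break when forming the products $[R(z)]^p[L_k(z)]^{5-p}$. Once the cancellation $|c_{pk}|\,|R(z)|^p\sim k^{p\alpha-p/2}$ is confirmed, the $\log n$ growth emerges only at the extremal exponent $\alpha=\tfrac12$ and only from the $p=4$ term, which explains why the hypothesis $-1<\alpha\le\tfrac12$ is exactly the one needed.
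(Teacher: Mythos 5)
Your proposal is correct and follows essentially the same route as the paper: bound $|R(z)|=\textbf{\textit{O}}(K_n n^{\alpha-1})$, apply the triangle inequality to (\ref{3.4}), combine Lemmas \ref{lem4.1} and \ref{lem4.2} so that the $K_n$ and $n$ powers cancel leaving $k^{5\alpha+p-15/2}$, and sum over $k$, with the $\log n$ arising from the $p=4$ term at $\alpha=\tfrac12$. Your explicit handling of the boundary indices $k=0,2n+1$ and the identification of exactly which term forces the restriction $\alpha\le\tfrac12$ is, if anything, slightly more transparent than the paper's own presentation.
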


\begin{proof}
	From (\ref{2.2}) and (\ref{2.3}), we have
	\begin{eqnarray}\label{4.10}
		R(z)=(z^2-1)K_nP_n^{(\alpha,\beta)}\Bigg(\frac{1+z^2}{2z}\Bigg)z^n.
	\end{eqnarray}
	Taking modulus on both the sides and using (\ref{2.5}) and (\ref{2.7}), we have
	\begin{equation}\label{4.11}
		\mid R(z)\mid=\textit{\textbf{O}}(K_nn^{\alpha-1}).
	\end{equation}
	For $ \mid x_k-x\mid\,\geq\dfrac{1}{2}\mid 1-x_k^2\mid  $ and from (\ref{3.4}) and (\ref{3.5}), we have
	\begin{eqnarray}\notag
		\sum\limits_{k=0}^{2n+1} \mid A_{0k}(z)\mid&=&\sum\limits_{k=0}^{2n+1} \mid L_k(z)\mid ^{5} + \sum\limits_{k=0}^{2n+1} \sum_{p=1}^{4}\mid c_{pk}\mid\,\mid R(z)\mid^p\,\mid L_k(z)\mid^{5-p}.\\ \notag
	\end{eqnarray}
	Using (\ref{4.11}), Lemma \ref{lem4.1} and Lemma \ref{lem4.2}, we get
	\begin{eqnarray}\notag
		\sum\limits_{k=0}^{2n+1} \mid A_{0k}(z)\mid
		&=&\textit{\textbf{O}}\Bigg(\sum\limits_{k=0}^{2n+1} \frac{1}{k^{-5\alpha+\frac{15}{2}}} + \sum\limits_{k=0}^{2n+1} \sum_{p=1}^{4}\,\frac{1}{k^{-5\alpha-p+\frac{15}{2}}} \Bigg),\\\notag
		&=&\textit{\textbf{O}}\Bigg(\sum\limits_{k=0}^{2n+1} \sum_{p=0}^{4}\,\frac{1}{k^{-5\alpha-p+\frac{15}{2}}} \Bigg).\\\notag	
	\end{eqnarray}
	Now, we get
	\begin{equation}\label{4.12}
		\sum\limits_{k=0}^{2n+1} \mid A_{0k}(z)\mid=\textit{\textbf{O}}(\log n),	\qquad\quad\qquad\qquad\Bigg\{-1<\alpha\leq\frac{13}{10}-\frac{p}{5}\Bigg\}.
	\end{equation}
	Similarly, for $ \mid x_k-x\mid\leq\frac{1}{2}\mid 1-x_k^2\mid  $, we get the same result.\\
	Since range of $ \alpha $ with $ p=4 $ lies in the intersection of all the cases. Hence, the lemma follows.
\end{proof}

\section{Proof of theorem 1.1}\label{5}

Let $f(z)$ be a function that is  continous on $\mathbb{T}\cup\mathbb{D}$ and analytic on $ \mathbb{D} $.
Since $Q_n(z)$ is the uniquely determined polynomial of degree less than $ (2n+2)(r+1) $ and the polynomial $F_n(z)$ satisfying equation (\ref{2.14}) can be expressed as 
\begin{equation}\label{5.1}
	F_n(z)=\sum\limits_{k=0}^{2n+1}F_n(z_k)A_k(z).
\end{equation}
Then
\begin{equation}\label{5.2}
	\mid Q_n(z)-f(z)\mid\, \leq\, \mid Q_n(z)-F_n(z)\mid +\mid F_n(z)-f(z)\mid.
\end{equation}
Using (\ref{3.2}) and (\ref{5.1}), we have
\begin{eqnarray}\notag
	\mid Q_n(z)-f(z)\mid\, &\leq&\,\underbrace{\sum\limits_{k=0}^{2n+1}\mid f(z_k)-F_n(z_k)\mid \mid A_k(z) \mid}_{N_1}+\underbrace{\mid F_n(z)-f(z)\mid }_{N_2}.\\\notag
\end{eqnarray}
We have
\begin{equation}\label{5.3}
	\mid Q_n(z)-f(z)\mid\,\leq \,N_1+N_2.
\end{equation}

From (\ref{2.14}) and (\ref{4.9}), we have
\begin{eqnarray}\label{5.4}
	N_1=\textit{\textbf{O}}\big(\omega(f,n^{-1})\log n\big).
\end{eqnarray}

From (\ref{2.14}), we have

\begin{eqnarray}\label{5.5}
	N_2=\textit{\textbf{O}}\big(n\,\omega(f,n^{-1})\big).
\end{eqnarray}
Using (\ref{5.4}) and (\ref{5.5}) in (\ref{5.3}), we get
\begin{eqnarray}\notag		
	\mid Q_n(z)-f(z) \mid \,=\, \textit{ \textbf{O}}\big(\omega(f,n^{-1})\log n\big).
\end{eqnarray}

Hence, Theorem \ref{thm1.1} follows.\\

\textbf{Data Availibility:}  Data sharing not applicable to this article as no data-sets were generated or analysed during the current study.\\

\textbf{Author contributions:}\\
\textit{Conceptualisation:} S. Bahadur, Varun ; \textit{Writing-Original Draft:}  Varun\\

\textbf{Conflicts of Interest:}  The authors declare no conflict of interest.


%
%


\begin{thebibliography}{HD}
%

%
%

%

	\bibitem{1}
Bahadur, S.: (0,0,1) interpolation on the unit circle. International Journal of Mathematical Analysis. 5, 1429-1434 (2011)

\bibitem{2} 
Berriochoa, E., Cachafeiro, A., Garc\'{i}a-amor, J. M.: An extension of Fej\'{e}r's condition for Hermite interpolation. Complex Analysis and Operator Theory. 6, 651–664 (2012)


\bibitem{3}
Berriochoa, E., Cachafeiro, A., D\'{i}az, J., Mart\'{i}nez Brey, E.: Rate of convergence of Hermite- Fej\'{e}r interpolation on the unit circle. Journal of Applied Mathematics. Article ID 407128, 8 pages (2013)  \url{https://doi.org/10.1155/2013/407128}



\bibitem{4} 
Berriochoa, E., Cachafeiro, A., D\'{i}az, J.: Hermite Interpolation on the Unit Circle Considering up to the Second Derivative. ISRN Mathematical Analysis. (2014) \url{http://dx.doi.org/10.1155/2014/808519}

\bibitem{5} 
Chen, W., Sharma, A.: Lacunary interpolation on some non-uniformly distributed nodes on the unit circle. Annales Universitatis Scientiarum Budapestinensis. 16, 69-82 (2004)	


\bibitem{6}
Daruis, L., Gonz\'{a}lez-vera, P.: A Note on Hermite -Fej\'{e}r Interpolation for the Unit Circle. Applied Mathematic Letters. 14, 997-1003 (2001)

\bibitem{7} 
Fej\'{e}r, L.: Über Interpolation. Gött. Nachr. 6, 66–91 (1916)

\bibitem{8}
Jackson, D.: Ueber die Genauigkeit der Annäherung stetiger Funktionen durch ganze rationale Funktionen gegebenen Grades und trigonometrische Summen gegebener Ordnung. Göttingen. (1911)

\bibitem{9}
Mills, T. M.:  Some techniques in Approximation theory. Math. Scientist. 5, 105-120 (1980)

\bibitem{10}
Prasad, J.: On Hermite and Hermite-Fejér interpolation of higher order. Demonstratio Mathematica. 26, 413-425 (1993)

\bibitem{11} 
Sung, H.S., Ko, D. H., Sakai, R.: Lp Convergence of Higher order Hermite or Hermite-Fej{\'e}r Interpolation polynomials with exponential-type weights ( II ). Global Journal of Pure and Applied Mathematics. 13, 7401–7426 (2017)

\bibitem{12}
Szabados, J., V\'{e}rtesi, P.: Interpolation of Functions. World Scientific Publishers. (1990)

\bibitem{13}
Szeg\H{o}, G.: Orthogonal Polynomials. Amer. Math. Soc. Coll. 23, (1975)

\bibitem{14} 
Xiang, S., He, G.: The Fast Implementation of Higher Order Hermite--Fejér Interpolation. SIAM Journal on Scientific Computing. 37, A1727–A1751 (2015)	




\end{thebibliography}


\end{document}